\documentclass{amsart}
\usepackage{amssymb}

\newtheorem{theorem}{Theorem}[section]
\newtheorem{lemma}[theorem]{Lemma}

\theoremstyle{definition}
\newtheorem{definition}[theorem]{Definition}

\newtheorem{corollary}[theorem]{Corollary}

\newtheorem{example}[theorem]{Example}

\theoremstyle{remark}
\newtheorem{remark}[theorem]{Remark}

\numberwithin{equation}{section}

\begin{document}

\title{Gluing and Hilbert Functions of Monomial Curves}

\author{Feza Arslan}
\address{Department of Mathematics,
Middle East Technical University, Ankara, 06531 Turkey}
\email{sarslan@metu.edu.tr}
\author{P{\i}nar Mete}
\address{Department of Mathematics, Bal{\i}kesir University, Bal{\i}kesir,
10145 Turkey} \email{pinarm@balikesir.edu.tr}
\author{Mesut \c{S}ah\.{i}n}
\address{Department of Mathematics,
At{\i}l{\i}m University, Ankara, 06836 Turkey}
\email{mesutsahin@gmail.com}


\subjclass[2000]{Primary 13H10, 14H20; Secondary 13P10}
\keywords{Hilbert function of local ring, tangent cone, monomial
curve, numerical semigroup, semigroup gluing, nice gluing, Rossi's
conjecture}


\commby{}

\dedicatory{}

\begin{abstract}
In this article, by using the technique of gluing semigroups, we
give infinitely many families of 1-dimensional local rings with
non-decreasing Hilbert functions. More significantly, these are
local rings whose associated graded rings are not necessarily
Cohen-Macaulay. In this sense, we give an effective technique to
construct large families of 1-dimensional Gorenstein local rings
associated to monomial curves, which support Rossi's conjecture
saying that every Gorenstein local ring has non-decreasing Hilbert
function.
\end{abstract}

\maketitle

\section{Introduction}

In this article, we study the Hilbert functions of local rings
associated to affine monomial curves obtained by using the technique
of gluing numerical semigroups. The concept of gluing was introduced
by J.C. Rosales in \cite{ros} and used by several authors to produce
new examples of set-theoretic and ideal-theoretic complete
intersection affine or projective varieties (for example
\cite{mo-tho, sahin, thoma4}). We give large families of local rings
with non-decreasing Hilbert functions and generalize the results in
\cite{arslan} and \cite{pf} given for nice extensions, which are in
fact special types of gluings. In doing this, we also give the
definition of a nice gluing which is a generalization of a nice
extension. Moreover, by using the technique of nice gluing, we
obtain infinitely many families of 1-dimensional local rings with
non-Cohen-Macaulay associated graded rings and still having
non-decreasing Hilbert functions. We demonstrate that nice gluing is
an effective technique to construct large families of 1-dimensional
Gorenstein local rings associated to monomial curves, which support
the conjecture due to Rossi saying that every Gorenstein local ring
has non-decreasing Hilbert function \cite{pf}.

Our main interest in this article is the following question about gluing:

\begin{quotation}
\textit{Question}. If the Hilbert functions of the local rings
associated to two monomial curves are non-decreasing, is the Hilbert
function of the local ring associated to the monomial curve obtained
by gluing these two monomial curves also non-decreasing?
\end{quotation}

Every monomial curve in affine 2-space is obtained by gluing, and it
is well-known that every local ring associated to a monomial curve
in affine 2-space has a non-decreasing Hilbert function. In affine
3-space, every monomial curve is not obtained by gluing, but every
local ring associated to a monomial curve in affine 3-space has also
a non-decreasing Hilbert function. This follows from the important
result of Elias saying that every one-dimensional Cohen-Macaulay
local ring with embedding dimension three has a non-decreasing
Hilbert function \cite{elias}. Thus, the above question is trivial
for the monomial curves in affine 2-space and 3-space, which are
obtained by gluing, while the question is open even for the monomial
curves in 4-space, which are obtained by gluing. What makes this
question important is that, if the answer is affirmative even in the
case of gluing complete intersection monomial curves, it will follow
that the Hilbert function of every local ring associated to a
complete intersection monomial curve is non-decreasing. This will be
due to a result of Delorme \cite{delorme}, which is restated by
Rosales in terms of gluing and says that every complete intersection
numerical semigroup minimally generated by at least two elements is
a gluing of two complete intersection numerical semigroups
\cite[Theorem 2.3]{ros}. Considering that it is still not known
whether the Hilbert function of local rings with embedding dimension
four associated to complete intersection monomial curves in affine
4-space is non-decreasing, this will be an important step in proving
Rossi's conjecture.

We recall that an \textit{affine monomial curve}
$C(n_{1},\dots,n_{k})$ is a curve with generic zero
$(t^{n_{1}},\dots,t^{n_{k}})$ in the affine $n$-space
$\mathbb{A}^{n}$ over an algebraically closed field $K$, where
$n_{1}<\dots<n_{k}$ are positive integers with ${\rm
gcd}(n_1,n_2,\dots,n_k)=1$ and $\{n_{1},n_2,\dots,n_{k}\}$ is a
minimal set of generators for the numerical semigroup $\langle
n_{1},n_2,\dots,n_{k} \rangle=\{n \mid n=\sum_{i=1}^{k}a_{i}n_{i},
\; a_{i}$'s are non-negative integers\}. The local ring associated
to the monomial curve $C=C(n_{1},\dots,n_{k})$ is
$K[[t^{n_1},\dots,t^{n_k}]]$, and the Hilbert function of this local
ring is the Hilbert function of its associated graded ring
$gr_m(K[[t^{n_1},\dots,t^{n_k}]])$, which is isomorphic to the ring
$K[x_1,\dots,x_k]/I(C)_{*}$, where $I(C)$ is the defining ideal of
$C$ and $I(C)_{*}$ is the ideal generated by the polynomials $f_{*}$
for $f$ in $I(C)$ and $f_{*}$ is the homogeneous summand of $f$ of
least degree. In other words, $I(C)_{*}$ is the defining ideal of
the tangent cone of $C$ at $0$.

\section{Technique of Gluing Semigroups and Monomial curves}

In this section, we first give the definition of gluing for
numerical semigroups.

\begin{definition} \label{gluing} \cite[Lemma 2.2]{ros} Let $S_1$ and
$S_2$ be two numerical semigroups minimally generated by
$m_{1}<\dots<m_{l}$ and $n_{1}<\dots<n_{k}$ respectively. Let
$p=b_1m_1+\dots+b_lm_l \in S_1$ and $q=a_1n_1+\dots+a_kn_k \in S_2$
be two positive integers satisfying $gcd(p,q)=1$ with $p \not\in
\{m_{1},\dots,m_{l}\}$, $q \not\in \{n_{1},\dots,n_{k}\}$ and
$\{qm_{1},\dots,qm_{l}\} \cap \{pn_{1},\dots,pn_{k}\}={\o}$. The
numerical semigroup $S=\langle
qm_{1},\dots,qm_{l},pn_{1},\dots,pn_{k} \rangle$ is called a gluing
of the semigroups $S_1$ and $S_2$.
\end{definition}

Thus, the monomial curve
$C=C(qm_{1},\dots,qm_{l},pn_{1},\dots,pn_{k})$ can be interpreted as
the gluing of the monomial curves $C_1=C(m_1,\dots,m_l)$ and
$C_2=C(n_1,\dots,n_k)$, if $p$ and $q$ satisfy the conditions in
Definition \ref{gluing}. Moreover, from \cite[Theorem 1.4]{ros}, if
the defining ideals $I(C_1) \subset K[x_1,\dots,x_l]$ of $C_1$ and
$I(C_2) \subset K[y_1,\dots,y_k]$ of $C_2$ are generated by the sets
$G_1=\{f_1,\dots,f_s\}$ and $G_2=\{g_1,\dots,g_t\}$ respectively,
then the defining ideal of $I(C) \subset
K[x_1,\dots,x_l,y_1,\dots,y_k]$ is generated by the set
$G=\{f_1,\dots,f_s,g_1,\dots,g_t,x_1^{b_1}\dots
x_l^{b_l}-y_1^{a_1}\dots y_k^{a_k}\}$.

Now, consider the local rings $R_1=K[[t^{m_1},\dots,t^{m_l}]]$,
$R_2=K[[t^{n_1},\dots,t^{n_k}]]$ and
$R=K[[t^{qm_1},\dots,t^{qm_l},t^{pn_1},\dots,t^{pn_k}]]$ associated
respectively to the monomial curves $C_1$, $C_2$ and $C$ obtained by
gluing $C_1$ and $C_2$. Our main interest is whether the Hilbert
function of $R$ is non-decreasing, given that the Hilbert functions
of the local rings $R_1$ and $R_2$ are non-decreasing.

We first answer the following question: If $C_1$ and $C_2$ have
Cohen-Macaulay tangent cones, is the tangent cone of the monomial
curve $C$ obtained by gluing these two monomial curves necessarily
Cohen-Macaulay? The following example shows that the answer is no.

\begin{example} \label{non-cm} Let $C_1$ and $C_2$ be the monomial curves
$C_1=C(5,12)$ and $C_2=C(7,8)$. Obviously, they have Cohen-Macaulay
tangent cones. By a gluing of $C_1$ and $C_2$, we obtain the
monomial curve $C=C(21\times5,21\times12,17\times7,17\times8)$. The
ideal $I(C)$ is generated by the following set $G=\{x_1^{12}-x_2^5,
y_1^8-y_2^7, x_1x_2-y_1^3\}$. The ideal $I(C)_*$ of the tangent cone
of $C$ at the origin is generated by the set $G_*=\{x_1x_2,x_2^5,
y_1^{15},y_2^7,x_2^4y_1^3,x_2^3y_1^6,x_2^2y_1^9,x_2y_1^{12}\}$ which
is a Gr\"{o}bner basis with respect to the negative degree reverse
lexicographical ordering with $x_2>y_2>y_1>x_1$. From \cite[Theorem
2.1]{arslan}, since $x_1$ divides $x_1x_2 \in G_*$, the monomial
curve $C$ obtained by a gluing of $C_1$ and $C_2$ does not have a
Cohen-Macaulay tangent cone. It should also be noted that Hilbert
function of the local ring corresponding to $C$ is non-decreasing,
although the tangent cone of $C$ is not Cohen-Macaulay.
\end{example}

This example leads us to ask the following question:

\begin{quotation}
\textit{Question}. If two monomial curves have Cohen-Macaulay
tangent cones, under which conditions does the monomial curve
obtained by gluing these two monomial curves also have a
Cohen-Macaulay tangent cone?
\end{quotation}

To answer this question partly, we first give the definition of a
nice gluing, which generalizes the definition of a nice extension
given in \cite{pf}.

\begin{definition}
Let $S_1=\langle m_{1},\dots,m_{l} \rangle$ and $S_2=\langle
n_{1},\dots,n_{k} \rangle$ be two numerical semigroups minimally
generated by $m_{1}<\dots<m_{l}$ and $n_{1}<\dots<n_{k}$
respectively. The numerical semigroup $S=\langle
qm_{1},\dots,qm_{l},pn_{1},\dots,pn_{k} \rangle$ obtained by gluing
$S_1$ and $S_2$ is called a nice gluing, if $p=b_1m_1+\dots+b_lm_l
\in S_1$ and $q=a_1n_1 \in S_2$ with $a_1 \leq b_1+\dots+b_l$.
\end{definition}

\begin{remark} \label{extension}
Notice that a nice extension defined in \cite{pf} is exactly a nice
gluing with $S_2=\langle 1 \rangle$.
\end{remark}

\begin{remark} \label{smallest}
It is important to determine the smallest integer among the
generators of the numerical semigroup $S=\langle
qm_{1},\dots,qm_{l},pn_{1},\dots,pn_{k} \rangle$ obtained by gluing,
since this is essential in checking the Cohen-Macaulayness of the
tangent cone of the associated monomial curve. The condition $a_1
\leq b_1+\dots+b_l$ with $m_{1}<\dots<m_{l}$, $n_{1}<\dots<n_{k}$,
${\rm gcd}(p,q)=1$ and $\{qm_{1},\dots,qm_{l}\} \cap
\{pn_{1},\dots,pn_{k}\}={\o}$ implies that
\[qm_1=a_1n_1m_1 \leq (b_1+\dots+b_l)n_1m_1 <
pn_{1}=(b_1m_1+\dots+b_lm_l)n_1\] and $qm_1$ is the smallest integer
among the generators of $S$.
\end{remark}

We can now state the following theorem:

\begin{theorem} \label{maintheorem1}
Let $S_1=\langle m_{1},\dots,m_{l} \rangle$ and $S_2=\langle
n_{1},\dots,n_{k} \rangle$ be two numerical semigroups minimally
generated by $m_{1}<\dots<m_{l}$ and $n_{1}<\dots<n_{k}$, and let
$S=\langle qm_{1},\dots,qm_{l},pn_{1},\dots,pn_{k} \rangle$ be a
nice gluing of $S_1$ and $S_2$. If the associated monomial curves
$C_1=C(m_{1},\dots,m_{l})$ and $C_2=C(n_{1},\dots,n_{k})$ have
Cohen-Macaulay tangent cones at the origin, then
$C=C(qm_{1},\dots,qm_{l},pn_{1},\dots,pn_{k})$ has also
Cohen-Macaulay tangent cone at the origin, and thus, the Hilbert
function of the local ring
$K[[t^{qm_1},\dots,t^{qm_l},t^{pn_1},\dots,t^{pn_k}]]$ is
non-decreasing.
\end{theorem}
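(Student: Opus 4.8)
The plan is to establish the Cohen--Macaulayness of the tangent cone of $C$ through the leading-term criterion of \cite[Theorem 2.1]{arslan} already exploited in Example \ref{non-cm}: the tangent cone is Cohen--Macaulay exactly when the variable attached to the \emph{smallest} generator of $S$ divides none of the leading monomials of a standard basis of $I(C)$ with respect to the negative degree reverse lexicographic ordering. By Remark \ref{smallest} the smallest generator of $S$ is $qm_1$, so the distinguished variable is $x_1$, and I fix a negative degree reverse lexicographic ordering on $K[x_1,\dots,x_l,y_1,\dots,y_k]$ in which $x_1$ is the least variable and whose restrictions to the $x$-variables and to the $y$-variables are the orderings witnessing the Cohen--Macaulayness of $C_1$ and of $C_2$.

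The heart of the argument is to produce an explicit standard basis of $I(C)$. By hypothesis and the criterion of \cite{arslan}, $I(C_1)$ has a standard basis $\mathcal{G}_1$ none of whose leading monomials is divisible by $x_1$, and $I(C_2)$ has a standard basis $\mathcal{G}_2$ none of whose leading monomials is divisible by $y_1$. From the gluing description, $I(C)$ is generated by $\mathcal{G}_1\cup\mathcal{G}_2\cup\{h\}$, where, because $q=a_1n_1$ in a nice gluing, the connecting binomial is $h=x_1^{b_1}\cdots x_l^{b_l}-y_1^{a_1}$. The nice gluing inequality $a_1\le b_1+\dots+b_l$ says precisely that $\deg(y_1^{a_1})\le\deg(x_1^{b_1}\cdots x_l^{b_l})$, so in the negative degree ordering the leading monomial of $h$ is the pure power $y_1^{a_1}$ (the reverse lexicographic tie-break, with $x_1$ least, settling the borderline case $a_1=b_1+\dots+b_l$ in favour of $y_1^{a_1}$). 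This is the crucial point, and it is exactly what fails in Example \ref{non-cm}, where $a_1>b_1+b_2$ forces $x_1x_2$ to become the leading monomial.

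I then claim $\mathcal{G}=\mathcal{G}_1\cup\mathcal{G}_2\cup\{h\}$ is a standard basis, which I would verify through Buchberger's criterion for standard bases (via Mora's normal form). The $S$-polynomials internal to $\mathcal{G}_1$ or to $\mathcal{G}_2$ reduce to zero because these are already standard bases and their reductions involve only $x$- or only $y$-variables, hence transfer verbatim to the larger ring. Every remaining $S$-polynomial pairs two elements from different blocks, or pairs $h$ with a $\mathcal{G}_i$-element; in each such case the two leading monomials are coprime (those of $\mathcal{G}_1$ lie in the $x$'s, those of $\mathcal{G}_2$ lie in the $y$'s and avoid $y_1$, while $\mathrm{LT}(h)=y_1^{a_1}$ is a pure $y_1$-power), so the product criterion makes them reduce to zero. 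Granting this, the leading monomials of $\mathcal{G}$ are those of $\mathcal{G}_1$, those of $\mathcal{G}_2$, and $y_1^{a_1}$; none is divisible by $x_1$, so \cite[Theorem 2.1]{arslan} yields that the tangent cone of $C$ is Cohen--Macaulay. The non-decreasing Hilbert function then follows automatically: in the one-dimensional Cohen--Macaulay graded ring $\mathrm{gr}_m(R)\cong K[x_1,\dots,x_l,y_1,\dots,y_k]/I(C)_*$ the image of $x_1$ is a degree-one nonzerodivisor (the content of $x_1$ dividing no leading monomial), so multiplication by $x_1$ embeds each graded piece into the next and the Hilbert function cannot decrease.

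I expect the main obstacle to be the standard-basis verification of the previous paragraph. Two points require care: one must be sure that the product (coprimality) criterion is genuinely valid for standard bases with respect to this local ordering, and that the restricted standard bases $\mathcal{G}_1,\mathcal{G}_2$ remain standard bases after extension to $K[x_1,\dots,x_l,y_1,\dots,y_k]$; and one must confirm the identification $\mathrm{LT}(h)=y_1^{a_1}$ in the borderline equal-degree case $a_1=b_1+\dots+b_l$, which is where the precise form of the tie-breaking order matters. Once these are secured, the remainder of the argument is formal.
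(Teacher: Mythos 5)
Your proposal is correct and follows essentially the same route as the paper's proof: the same explicit standard basis $\mathcal{G}_1\cup\mathcal{G}_2\cup\{x_1^{b_1}\cdots x_l^{b_l}-y_1^{a_1}\}$ with respect to the same negative degree reverse lexicographic ordering, the same use of the coprimality (product) criterion to verify it is a standard basis, and the same observation that the nice-gluing inequality $a_1\le b_1+\dots+b_l$ forces $\mathrm{LM}(h)=y_1^{a_1}$ so that $x_1$ divides no leading monomial. The criterion you attribute to \cite{arslan} is exactly the refinement the paper isolates as Lemma \ref{criterion}, and your explicit treatment of the equal-degree tie-break case is a detail the paper leaves implicit.
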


To prove this theorem, we first give a refinement of the criterion
for checking the Cohen-Macaulayness of the tangent cone of a
monomial curve given in \cite[Theorem 2.1]{arslan}, which was used
in Example \ref{non-cm}. The advantage of this modification in the
criterion is that instead of first finding the generators of the
tangent cone and then computing another Gr\"obner basis, it only
needs the computation of the standard basis of the generators of the
defining ideal of the monomial curve with respect to a special local
order. Recall that a local order is a monomial ordering with $1$
greater than any other monomial. For the examples and properties of
local orderings, see \cite{singular}. We denote the leading monomial
of a polynomial $f$ by ${\rm LM}(f)$.

\begin{lemma} \label{criterion}
Let $\langle n_{1},\dots,n_{k} \rangle$ be a numerical semigroup
minimally generated by $n_{1}<\dots<n_{k}$, $C=C(n_{1},\dots,n_{k})$
be the associated monomial curve and $G=\{f_1,\dots,f_s\}$ be a
minimal standard basis of the ideal $I(C) \subset K[x_1,\dots,x_k]$
with respect to the negative degree reverse lexicographical ordering
that makes $x_1$ the lowest variable. $C$ has Cohen-Macaulay tangent
cone at the origin if and only if $x_1$ does not divide ${\rm
LM}(f_i)$ for $1 \leq i \leq s$.
\end{lemma}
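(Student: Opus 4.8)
The plan is to reduce the statement to the criterion of \cite[Theorem 2.1]{arslan}, which is phrased in terms of a Gr\"obner basis of the tangent cone ideal $I(C)_*$, by showing that a standard basis of $I(C)$ computed directly with the local order carries exactly the same leading-term information. Recall that the local ring $K[[t^{n_1},\dots,t^{n_k}]]$ is one-dimensional and Cohen--Macaulay, that $x_1$ is sent to $t^{n_1}$ (a parameter, hence a non-zerodivisor), and that the tangent cone is Cohen--Macaulay precisely when the image $x_1^{*}$ of $x_1$ is a non-zerodivisor in $gr_m=K[x_1,\dots,x_k]/I(C)_*$. The criterion of \cite[Theorem 2.1]{arslan} translates this into a purely combinatorial condition: working with the negative degree reverse lexicographic order that makes $x_1$ the lowest variable, $x_1^{*}$ is a non-zerodivisor if and only if $x_1$ divides none of the minimal monomial generators of the initial ideal of $I(C)_*$. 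Here the choice of a reverse lexicographic order with $x_1$ smallest is essential: it is exactly the property that makes non-zerodivisor-ness of the lowest variable modulo $I(C)_*$ detectable on the initial ideal.

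First I would set up the two orders in play: the negative degree reverse lexicographic \emph{local} order $>$ on $K[x_1,\dots,x_k]$ used to compute the standard basis $G=\{f_1,\dots,f_s\}$ of $I(C)$, and the corresponding degree reverse lexicographic \emph{global} order $>'$ used for $I(C)_*$. The connecting observation is that for a negative degree order the leading term of any $f$ is picked from its lowest-degree homogeneous summand, so that $\mathrm{LM}_{>}(f)=\mathrm{LM}_{>'}(f_*)$. The crux is then the identity of leading ideals
\[ L_{>}(I(C)) \;=\; L_{>'}(I(C)_*), \]
equivalently, that the initial forms $\{(f_1)_*,\dots,(f_s)_*\}$ of a standard basis of $I(C)$ form a (global) Gr\"obner basis of $I(C)_*$. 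The inclusion $\subseteq$ is immediate from the displayed equality of leading monomials, since $f_*\in I(C)_*$ whenever $f\in I(C)$. The reverse inclusion is the substantive point and is where I expect the main difficulty: given a homogeneous $h\in I(C)_*$ one must produce $g\in I(C)$ whose initial form $g_*$ has the same leading monomial as $h$, controlling the cancellations that can occur when $h$ is written as a combination of initial forms. This is handled by Mora's tangent-cone normal-form (standard basis) division, which guarantees that the standard basis reduces every element of $I(C)$ to zero and hence that no leading monomial of $I(C)_*$ escapes $L_{>}(I(C))$.

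Once this identity is in hand the conclusion is immediate. Since $G$ is taken to be a \emph{minimal} standard basis, the monomials $\mathrm{LM}(f_1),\dots,\mathrm{LM}(f_s)$ are precisely the minimal generators of $L_{>}(I(C))=L_{>'}(I(C)_*)$, which is the initial ideal appearing in \cite[Theorem 2.1]{arslan}. Therefore $x_1$ divides some $\mathrm{LM}(f_i)$ if and only if $x_1$ divides a minimal generator of the initial ideal of $I(C)_*$, which by the criterion above happens exactly when $x_1^{*}$ fails to be a non-zerodivisor, that is, exactly when the tangent cone of $C$ fails to be Cohen--Macaulay. This yields the stated equivalence, and it realizes the promised computational gain: one never forms $I(C)_*$ or runs a second Gr\"obner basis computation, but reads the answer off the leading monomials of a single local standard basis of $I(C)$.
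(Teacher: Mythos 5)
Your proof is correct, but it takes a different route from the paper's. The paper does not quote \cite[Theorem 2.1]{arslan} as a black box; it argues directly from Garcia's criterion that the tangent cone is Cohen--Macaulay iff $x_1$ is a non-zerodivisor in $K[x_1,\dots,x_k]/I(C)_{*}$: if $x_1$ divides ${\rm LM}(f_i)$, the reverse-lexicographic property forces $f_{i_*}=x_1g$ and minimality of $G$ forces $g\not\in I(C)_*$, exhibiting $x_1$ as a zero-divisor; conversely a relation $x_1m\in I(C)_*$ with $m\not\in I(C)_*$ is traced back to some $f_i$ with $x_1\mid {\rm LM}(f_i)$. You instead isolate the identity $L_{>}(I(C))=L_{>'}(I(C)_*)$ of leading ideals and then invoke the graded criterion; this modular organization makes explicit exactly what the ``refinement'' over \cite[Theorem 2.1]{arslan} consists of, and is arguably cleaner. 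Two points to tighten. First, the inclusion you flag as the substantive difficulty, $L_{>'}(I(C)_*)\subseteq L_{>}(I(C))$, needs no Mora division and presents no cancellation problem: a homogeneous $h\in I(C)_*$ of degree $d$ can be written as $h=\sum_j b_j (w_j)_*$ with $w_j\in I(C)$ and $b_j$ homogeneous of degree $d-{\rm ord}(w_j)$, and then $w=\sum_j b_j w_j\in I(C)$ has degree-$d$ homogeneous part exactly $h$, so $w_*=h$ and ${\rm LM}_{>}(w)={\rm LM}_{>'}(h)$. Thus the identity holds for an arbitrary ideal; the standard-basis hypothesis is used only to get $L_{>}(I(C))=\langle {\rm LM}(f_1),\dots,{\rm LM}(f_s)\rangle$, and minimality only to identify these with the minimal generators (which the ``only if'' direction genuinely requires). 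Second, your justification ``the standard basis reduces every element of $I(C)$ to zero and hence no leading monomial of $I(C)_*$ escapes'' is a non sequitur as written, since the first clause concerns $L_{>}(I(C))$ rather than $I(C)_*$; it is repaired by the computation just given.
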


This lemma combines a result of Bayer-Stillman \cite[Theorem
15.13]{eisenbud} with the well-known fact that a monomial curve
$C=C(n_{1},\dots,n_{k})$, where $n_1$ is smallest among the integers
$n_{1},\dots,n_{k}$,  has Cohen-Macaulay tangent cone if and only if
$x_1$ is not a zero divisor in the ring $K[x_1,\dots,x_k]/I(C)_{*}$
\cite[Theorem 7]{garcia}. \\

\noindent {\em Proof}. Recalling that $f_{*}$ is the homogeneous
summand of the polynomial $f$ of least degree, if $x_1$ divides
${\rm LM}(f_i)$ for some $i$, then either $f_{i_*}=x_1m$ or
$f_{i_*}=x_1m+\sum c_im_i$, where $m_i$'s are monomials having the
same degree with $x_1m$ and $c_i$'s are in $K$. In the latter case,
$x_1$ must divide each $m_i$, because we work with the negative
degree reverse lexicographical ordering that makes $x_1$ the lowest
variable. This implies that in both cases $f_{i_*}=x_1g$ where $g$
is a homogeneous polynomial. Moreover, $g \not\in I(C)_*$. If $g \in
I(C)_*$, then there exists $f \in I(C)$ such that $f_{*}=g$ so ${\rm
LM}(f)={\rm LM}(g)$. Since $\langle {\rm LM}(f_1),\dots,{\rm
LM}(f_s) \rangle= \langle{\rm LM}(I(C))\rangle$, there exists an
$f_j \in G$ such that ${\rm LM}(f_j)$ divides ${\rm LM}(f)={\rm
LM}(g)$ and this contradicts with the minimality of $G$. Thus, $x_1g
\in I(C)_*$, while $g \not\in I(C)_*$, which makes $x_1$ a
zero-divisor in $K[x_1,\dots,x_k]/I(C)_{*}$. Hence, the tangent cone
of the monomial curve $C$ is not Cohen-Macaulay. Conversely, if
$K[x_1,\dots,x_k]/I(C)_{*}$ is not Cohen-Macaulay, then $x_1$ is a
zero-divisor in $K[x_1,\dots,x_k]/I(C)_{*}$. Thus, $x_1m \in
I(C)_*$, where $m$ is a monomial and $m \not\in I(C)_*$. The ideal
generated by the leading monomials of the elements in $I(C)$
obviously contains $x_1m$. Since $G$ is a standard basis, there
exists $f_i \in G$ such that ${\rm LM}(f_i)=x_1m'$, where $m'$
divides $m$ and $m' \not\in I(C)_*$, because $m \not\in I(C)_*$.
This completes the proof.
\begin{flushright}
$\Box$
\end{flushright}

We can now prove Theorem~\ref{maintheorem1}. \\

\noindent {\em Proof of Theorem \ref{maintheorem1}}. By using the
notation in \cite{singular}, we denote the s-polynomial of the
polynomials $f$ and $g$ by $\rm{spoly}(f,g)$ and the Mora's
polynomial weak normal form of $f$ with respect to $G$ by $NF(f
\vert G)$. Let $G_1=\{f_1,\dots,f_s\}$ be a minimal standard basis
of the ideal $I(C_1) \subset K[x_1,\dots,x_l]$ with respect to the
negative degree reverse lexicographical ordering with $x_2 > \cdots
> x_l > x_1$ and $G_2=\{g_1,\dots,g_t\}$ be a minimal standard basis of the ideal
$I(C_2) \subset K[y_1,\dots,y_k]$ with respect to the negative
degree reverse lexicographical ordering with $y_2 > \cdots > y_k >
y_1$. Since $C_1$ and $C_2$ have Cohen-Macaulay tangent cones at the
origin, we conclude from Lemma \ref{criterion} that $x_1$ does not
divide the leading monomial of any element in $G_{1}$ and $y_1$ does
not divide the leading monomial of any element in $G_{2}$ for the
given orderings. The defining ideal of the monomial curve $C$
obtained by gluing is generated by the set
$G=\{f_1,\dots,f_s,g_1,\dots,g_t,x_1^{b_1}\dots
x_l^{b_l}-y_1^{a_1}\}$. Moreover, this set is a minimal standard
basis with respect to the negative degree reverse lexicographical
ordering with $y_2 > \cdots > y_k > y_1 > x_2 > \cdots
> x_l > x_1$, because $NF({\rm spoly}(f_i,g_j) \vert G)=0$, $NF({\rm
spoly}(f_i,x_1^{b_1}\dots x_l^{b_l}-y_1^{a_1}) \vert G)=0$ and
$NF({\rm spoly}(g_j,x_1^{b_1}\dots x_l^{b_l}-y_1^{a_1}) \vert G)=0$
for $1 \leq i \leq s$ and $1 \leq j \leq t$. This is due to the fact
that $NF({\rm spoly}(f,g) \vert G)=0$, if $\rm{lcm}({\rm LM}(f),{\rm
LM}(g))={\rm LM}(f)\cdot {\rm LM}(g)$. From Remark \ref{smallest},
$qm_1$ is the smallest integer among the generators of $G$. Thus,
$C$ has Cohen-Macaulay tangent cone at the origin if and only if
$x_1$, which corresponds to $qm_1$, is not a zero-divisor in
$K[x_1,\dots,x_l,y_1,\dots,y_k]/I(C)_{*}$. Since $x_1$ does not
divide the leading monomial of any element in $G_{1}$ and $G_{2}$,
and ${\rm LM}(x_1^{b_1} \dots x_l^{b_l}-y_1^{a_1})=y_1^{a_1}$, $x_1$
does not divide the leading monomial of any element in $G$, which is
a minimal standard basis with respect to the negative degree reverse
lexicographical ordering with $y_2 > \cdots > y_k > y_1 > x_2 >
\cdots > x_l > x_1$. Thus, from Lemma \ref{criterion}, $C$ has
Cohen-Macaulay tangent cone at the origin.
\begin{flushright}
$\Box$
\end{flushright}

\begin{remark}
From Remark \ref{extension}, every nice extension is a nice gluing.
Thus, if the monomial curve $C=C(m_{1},\dots,m_{l})$ has a
Cohen-Macaulay tangent cone at the origin, then every monomial curve
$C'=C(qm_{1},\dots,qm_{l},b_1m_1+\dots+b_lm_l)$ obtained by a nice
gluing has also Cohen-Macaulay tangent cone at the origin. Thus,
Theorem \ref{maintheorem1} generalizes the results in
\cite[Proposition 4.1]{arslan} and \cite[Theorem 3.6]{pf}.
\end{remark}

\begin{example} Let $C_1$ and $C_2$ be the monomial curves
$C_1=C(m_1,m_2)$ with $m_1 < m_2$ and $C_2=C(n_1,n_2)$ with $n_1 <
n_2$. Obviously, they have Cohen-Macaulay tangent cones. From
Theorem \ref{maintheorem1}, every monomial curve
$C=C(qm_1,qm_2,pn_1,pn_2)$ obtained by a nice gluing with
$q=a_1n_1$, $p=b_1m_1+b_2m_2$, ${\rm gcd}(p,q)=1$ and $a_1 \leq
b_1+b_2$ has Cohen-Macaulay tangent cone at the origin, so the local
ring $R=K[[t^{qm_1},t^{qm_2},t^{pn_1},t^{pn_2}]]$ associated to the
monomial curve $C$ has a non-decreasing Hilbert function. Thus, by
starting with fixed $m_1,m_2,n_1$ and $n_2$, we can construct
infinitely many families of 1-dimensional local rings with
non-decreasing Hilbert functions. For example, consider the monomial
curves $C_1=C(2,3)$ and $C_2=C(4,5)$. By choosing $q=2n_1=8$ and
$p=(2r)m_1+m_2=4r+3$, for any $r \geq 1$, we obtain the monomial
curve $C(16,24,16r+12,20r+15)$, which is a nice gluing of $C_1$ and
$C_2$. Since $C$ is also a complete intersection monomial curve
having a Cohen-Macaulay tangent cone, the associated local rings are
Gorenstein with non-decrasing Hilbert functions. Obviously, they support Rossi's conjecture.
\end{example}

This example shows that gluing is an effective method to obtain new
families of monomial curves with Cohen-Macaulay tangent cones.
Especially in affine 4-space, nice gluing is a very efficent method
to obtain large families of complete intersection monomial curves
with Cohen-Macaulay tangent cones, since every monomial curve in
affine 2-space has a Cohen-Macaulay tangent cone.

\section{Monomial Curves with Non-Cohen-Macaulay Tangent Cones}

In this section we show that nice gluing is not only an efficient
tool to obtain new families of monomial curves with Cohen-Macaulay
tangent cones, but more significantly, it is very useful for
obtaining families of monomial curves with non-Cohen-Macaulay
tangent cones having nondecreasing Hilbert functions. In other
words, it is an effective method to obtain families of local rings
with non-decreasing Hilbert functions. In this sense, it can be used
to obtain families of local rings in proving the conjecture due to
Rossi saying that a one-dimensional Gorenstein local ring has a
non-decreasing Hilbert function.

\begin{theorem} \label{maintheorem2}
Let $S_1=\langle m_{1},\dots,m_{l}\rangle$ and $S_2=\langle
n_{1},\dots,n_{k}\rangle$ be two numerical semigroups minimally
generated by $m_{1}<\dots<m_{l}$ and $n_{1}<\dots<n_{k}$, and let
$S=\langle qm_{1},\dots,qm_{l},pn_{1},\dots,pn_{k} \rangle$ be a
nice gluing of $S_1$ and $S_2$. (Recall that $p=b_1m_1+\dots+b_lm_l
\in S_1$ and $q=a_1n_1 \in S_2$ with $a_1 \leq b_1+\dots+b_l$.) Let
the local ring $K[[t^{m_1},\dots,t^{m_l}]]$ associated to the
monomial curve $C_1=C(m_{1},\dots,m_{l})$ have a non-decreasing
Hilbert function and let $C_2=C(n_{1},\dots,n_{k})$ have
Cohen-Macaulay tangent cone at the origin, then the Hilbert function
of the local ring
$K[[t^{qm_1},\dots,t^{qm_l},t^{pn_1},\dots,t^{pn_k}]]$ associated to
the monomial curve $C=C(qm_{1},\dots,qm_{l},pn_{1},\dots,pn_{k})$
obtained by gluing is also non-decreasing.
\end{theorem}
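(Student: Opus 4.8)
\medskip
\noindent\emph{Proof proposal.} The plan is to extract the Hilbert function of the tangent cone of $C$ from an explicit standard basis of $I(C)$ and to show that its Hilbert series factors as a product of two power series, one of which is the non-decreasing Hilbert series of $C_1$ and the other of which has nonnegative coefficients.

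First I would check that, exactly as in the proof of Theorem~\ref{maintheorem1}, the set
\[
G=\{f_1,\dots,f_s,g_1,\dots,g_t,\;x_1^{b_1}\cdots x_l^{b_l}-y_1^{a_1}\}
\]
is a minimal standard basis of $I(C)$ for the negative degree reverse lexicographical ordering with $y_2>\cdots>y_k>y_1>x_2>\cdots>x_l>x_1$, where $G_1=\{f_1,\dots,f_s\}$ and $G_2=\{g_1,\dots,g_t\}$ are the standard bases of $I(C_1)$ and $I(C_2)$. The key observation is that this computation does \emph{not} use any Cohen-Macaulayness of $C_1$: all three families of s-polynomials reduce to zero merely because the relevant pairs of leading monomials are coprime, and the coprimality of ${\rm LM}(g_j)$ with ${\rm LM}(x_1^{b_1}\cdots x_l^{b_l}-y_1^{a_1})=y_1^{a_1}$ requires only $y_1\nmid{\rm LM}(g_j)$, which holds because $C_2$ has a Cohen-Macaulay tangent cone (Lemma~\ref{criterion}).

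Next I would read off the leading-term ideal $L=\langle{\rm LM}(f_1),\dots,{\rm LM}(f_s),{\rm LM}(g_1),\dots,{\rm LM}(g_t),y_1^{a_1}\rangle$. Since each ${\rm LM}(f_i)$ is a monomial in $x_1,\dots,x_l$ only, while each ${\rm LM}(g_j)$ and $y_1^{a_1}$ are monomials in $y_1,\dots,y_k$ only, the ideal splits as $L=L_1+L_2'$ with $L_1\subset K[x_1,\dots,x_l]$ and $L_2'=\langle{\rm LM}(g_1),\dots,{\rm LM}(g_t),y_1^{a_1}\rangle\subset K[y_1,\dots,y_k]$. Hence the standard monomials modulo $L$ are precisely the products of a standard monomial modulo $L_1$ with one modulo $L_2'$, giving a graded isomorphism $K[x,y]/L\cong(K[x]/L_1)\otimes_K(K[y]/L_2')$. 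Because the leading-term ideal of a local degree ordering computes the Hilbert function of the tangent cone, the Hilbert function of $K[x,y]/I(C)_*$ equals that of $K[x,y]/L$, and likewise $H_{C_1}$ is the Hilbert function of $K[x]/L_1$. Writing $H_X(z)$ for Hilbert series, I thus obtain the identity
\[
H_C(z)=H_{C_1}(z)\cdot H_{B}(z),\qquad B:=K[y_1,\dots,y_k]/L_2'=gr_m(R_2)/(y_1^{a_1}).
\]

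Finally I would deduce monotonicity. Write $H_{C_1}(z)=\sum_d h_d z^d$ and $H_B(z)=\sum_j p_j z^j$, with the convention $h_d=0$ for $d<0$. By hypothesis $(h_d)$ is non-decreasing, and $(p_j)$ is nonnegative since $H_B$ is the Hilbert series of an actual ring; moreover, as $C_2$ is Cohen-Macaulay, $y_1$ is a non-zero-divisor in $gr_m(R_2)$, so $y_1^{a_1}$ is a homogeneous non-zero-divisor of degree $a_1$ and in fact $H_B(z)=(1-z^{a_1})H_{C_2}(z)$ is a polynomial with nonnegative coefficients. Then $H_C(n)=\sum_j p_j h_{n-j}$ and
\[
H_C(n+1)-H_C(n)=\sum_j p_j\,(h_{n+1-j}-h_{n-j})\ge 0,
\]
because every $p_j\ge0$ and every difference $h_{n+1-j}-h_{n-j}\ge0$ (the latter using also $h_{-1}=0\le h_0=1$). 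Hence $H_C$ is non-decreasing.

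The step I expect to be the main obstacle is the factorization: proving that the Hilbert function of the tangent cone of the glued curve genuinely splits as this product. Everything hinges on the leading-term ideal being a sum of a pure $x$-ideal and a pure $y$-ideal, which in turn depends on $G$ being a standard basis with $y_1\nmid{\rm LM}(g_j)$. This is exactly where the Cohen-Macaulay hypothesis on $C_2$ is indispensable: without it, $\mathrm{spoly}(g_j,\,x_1^{b_1}\cdots x_l^{b_l}-y_1^{a_1})$ need not reduce to zero, forcing extra standard-basis elements that mix $x$- and $y$-variables and destroying the tensor decomposition. By contrast, the possible non-Cohen-Macaulayness of $C_1$ is harmless: it only means $x_1$ divides some ${\rm LM}(f_i)$, which enlarges $L_1$ but never disturbs the separation of $x$- and $y$-monomials, so the factorization, and with it the convolution argument, survives intact.
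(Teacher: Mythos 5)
Your proposal is correct and follows essentially the same route as the paper: the same standard basis $G$, the same splitting of the monomial leading ideal into a pure $x$-part and a pure $y$-part yielding a tensor-product factorization of the Hilbert series, and the same positivity argument (the paper separates $K[y_1]/\langle y_1^{a_1}\rangle$ as a third tensor factor with Hilbert series $1+t+\dots+t^{a_1-1}$ rather than invoking the non-zero-divisor exact sequence, and phrases the final step as a series $h_1(t)h_2(t)h_3(t)/(1-t)$ with nonnegative numerator rather than as a convolution, but these differences are cosmetic). Your explicit observation that the standard-basis computation never uses Cohen--Macaulayness of $C_1$, while the reduction of $\mathrm{spoly}(g_j,\,x_1^{b_1}\cdots x_l^{b_l}-y_1^{a_1})$ does require $y_1\nmid{\rm LM}(g_j)$, is precisely the point the paper relies on when it cites the proof of Theorem~\ref{maintheorem1}.
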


\begin{proof}
Let $G_1=\{f_1,\dots,f_s\}$ be a minimal standard basis
of the ideal $I(C_1) \subset K[x_1,\dots,x_l]$ with respect to the
negative degree reverse lexicographical ordering with $x_2 > \cdots
> x_l > x_1$ and $G_2=\{g_1,\dots,g_t\}$ be a minimal standard basis of the ideal
$I(C_2) \subset K[y_1,\dots,y_k]$ with respect to the negative
degree reverse lexicographical ordering with $y_2 > \cdots > y_k >
y_1$. Since $C_2$ has Cohen-Macaulay tangent cone, $y_1$ does not divide ${\rm LM}(g_i)$
for $1 \leq i \leq t$ from Lemma \ref{criterion}. From the proof of Theorem \ref{maintheorem1}, $G=\{f_1,\dots,f_s,g_1,\dots,g_t,x_1^{b_1}\dots
x_l^{b_l}-y_1^{a_1}\}$ is a minimal standard
basis with respect to the negative degree reverse lexicographical
ordering with $y_2 > \cdots > y_k > y_1 > x_2 > \cdots
> x_l > x_1$, and again from Lemma \ref{criterion}, we have $\langle
{\rm LM}(I(C)_*)\rangle=\langle {\rm LM}(f_1),\dots,{\rm
LM}(f_s),{\rm LM}(g_1),\dots,{\rm LM}(g_t),y_1^{a_1} \rangle$.
Hence, recalling the well-known result going back to Macaulay
\cite{macaulay}, the Hilbert function of the local ring
$K[[t^{qm_1},\dots,t^{qm_l},t^{pn_1},\dots,t^{pn_k}]]$ is equal to
the Hilbert function of the graded ring
$$R=K[x_1,\dots,x_l,y_1,\dots,y_k]/\langle{\rm LM}(f_1),\dots,{\rm LM}(f_s),{\rm LM}(g_1),\dots,{\rm LM}(g_t),y_1^{a_1}
\rangle.$$ By using \cite[Proposition 2.4]{bayer} and recalling
that $y_1 \nmid {\rm LM}(g_i)$ for $1 \leq i \leq t$, $R$ is
isomorphic to $R_1 \otimes_K R_2 \otimes_K R_3$, where
$R_1=K[x_1,\dots,x_l]/\langle{\rm LM}(f_1),\dots,{\rm
LM}(f_s)\rangle$, $R_2=K[y_2,\dots,y_l]/\langle{\rm
LM}(g_1),\dots,{\rm LM}(g_t)\rangle$ and $R_3=K[y_1]/\langle
y_1^{a_1}\rangle$. Moreover, Hilbert series of $R$ is the product
of Hilbert series of $R_1, R_2$ and $R_3$. Hilbert series of $R_1$
can be given as $h_1(t)/(1-t)$, where the polynomial $h_1(t)$ has
non-negative coefficients, since from the assumption the local
ring associated to the monomial curve $C_1$ has non-decreasing
Hilbert function. The Hilbert series of $R_2$ can be given as
$h_2(t)$, where the polynomial $h_2(t)$ has non-negative
coefficients, because $R_2$ is the Artinian reduction of the
Cohen-Macaulay tangent cone of the monomial curve $C_2$. Observing
that the Hilbert series of $R_3$ is $h_3(t)=1+t+\dots+t^{a_1-1}$,
we obtain that the Hilbert series of $R$ is
$h_1(t)h_2(t)h_3(t)/(1-t)$, where the polynomial
$h_1(t)h_2(t)h_3(t)$ has non-negative coefficients. This proves
that Hilbert function of $R$ is non-decreasing.
\end{proof}

We can now use this theorem to obtain large families of Gorenstein
monomial curves with non-Cohen-Macaulay tangent cones having
nondecreasing Hilbert functions to support Rossi's conjecture.

\begin{example} Let $C_1$ and $C_2$ be the monomial curves
$C_1=C(6,7,15)$ and $C_2=C(1)$. $C_1$ has non-Cohen-Macaulay
tangent cone, having a non-decreasing Hilbert function. Obviously,
they satisfy the conditions of the Theorem \ref{maintheorem2},
which implies that every local ring associated to the monomial
curve $C=C(6q,7q,15q,6q+7)$ obtained by a nice gluing (which is
also a nice extension) with $q \not\equiv 0$ (mod 7) has a
non-decreasing Hilbert function. $C_1$ is a complete intersection
monomial curve, with $I(C_1)=\langle
x_1^5-x_3^2,x_1x_3-x_2^3\rangle$ having a minimal standard basis
with respect to the negative degree reverse lexicographical
ordering with $x_2 > x_3> x_1$ given by
$\{x_1^5-x_3^2,x_1x_3-x_2^3,x_2^3x_3-x_1^6,x_2^6-x_1^7\}$. Hence,
$C$ is a complete intersection monomial curve, with $I(C)=\langle
x_1^5-x_3^2,x_1x_3-x_2^3,y_1^q-x_1^qx_2 \rangle$ having a minimal
standard basis with respect to the negative degree reverse
lexicographical ordering with $y_1>x_2 > x_3> x_1$ given by
$\{x_1^5-x_3^2,x_1x_3-x_2^3,x_2^3x_3-x_1^6,x_2^6-x_1^7,y_1^q-x_1^qx_2\}$,
which shows that $C$ has non-Cohen-Macaulay tangent cone. Thus, we
have obtained Gorenstein local rings
$K[[t^{6q},t^{7q},t^{15q},t^{6q+7}]]$ with $q \not\equiv 0$ (mod
7) having non-Cohen-Macaulay associated graded rings and
non-decreasing Hilbert functions. In this way, starting with a
complete intersection monomial curve $C_1$ in affine 3-space
having non-Cohen-Macaulay tangent cone, we can construct
infinitely many families of 1-dimensional Gorenstein local rings
with non-Cohen-Macaulay associated graded rings and non-decreasing
Hilbert functions. In this way, we can construct infinitely many
families of Gorenstein local rings supporting Rossi's conjecture.
\end{example}

\begin{corollary} Every local ring with embedding dimension 4
associated to a monomial curve obtained by a nice gluing of

a) $C_1=C(m_1,m_2)$ with $m_1 < m_2$ and $C_2=C(n_1,n_2)$ with $n_1
< n_2$

b) $C_1=C(m_1,m_2,m_3)$ with $m_1 < m_2 < m_3$ and
$C_2=C(1)$

c) $C_1=C(1)$ and $C_2=C(n_1,n_2,n_3)$ with $n_1 < n_2 < n_3$, whose tangent cone is Cohen-Macaulay\\

\noindent has a non-decreasing Hilbert function.
\end{corollary}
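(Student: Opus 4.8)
The plan is to read each of the three cases as a direct specialization of Theorem~\ref{maintheorem1} or Theorem~\ref{maintheorem2}, after first checking that the glued semigroup really does have four minimal generators. In every case the generators of $S$ are $qm_1,\dots,qm_l,pn_1,\dots,pn_k$ with $l+k=4$ (namely $2+2$, $3+1$, and $1+3$), and these $l+k$ elements minimally generate $S$ by the standard properties of gluing, so the associated local ring has embedding dimension $4$ exactly as claimed. The remaining work is to verify the hypotheses of the appropriate main theorem in each case.

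For case (a) I would invoke the fact, recalled in the introduction, that every monomial curve in affine $2$-space has a Cohen-Macaulay tangent cone. Concretely, since $\gcd(m_1,m_2)=1$ the ideal $I(C_1)=\langle x_1^{m_2}-x_2^{m_1}\rangle$ is principal, so $\{x_1^{m_2}-x_2^{m_1}\}$ is its own minimal standard basis, and because $m_1<m_2$ the least-degree form is $x_2^{m_1}$; as $x_1$, the variable attached to the smallest generator $m_1$, does not divide $x_2^{m_1}$, Lemma~\ref{criterion} shows that $C_1$ has a Cohen-Macaulay tangent cone, and symmetrically for $C_2$. With both $C_1$ and $C_2$ enjoying Cohen-Macaulay tangent cones, Theorem~\ref{maintheorem1} immediately gives that $C$ has a Cohen-Macaulay tangent cone, hence a non-decreasing Hilbert function.

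For case (b) I would instead appeal to Theorem~\ref{maintheorem2}, which is designed precisely to tolerate a non-Cohen-Macaulay tangent cone on the $C_1$ side. The local ring $K[[t^{m_1},t^{m_2},t^{m_3}]]$ is a one-dimensional Cohen-Macaulay local ring of embedding dimension three, so by Elias's theorem (quoted in the introduction) its Hilbert function is non-decreasing, with no hypothesis whatsoever on its tangent cone. Since $C_2=C(1)$ has local ring $K[[t]]$ whose tangent cone $K[y_1]$ is trivially Cohen-Macaulay, both hypotheses of Theorem~\ref{maintheorem2} hold, $C_1$ supplying the non-decreasing Hilbert function and $C_2$ the Cohen-Macaulay tangent cone, and the conclusion follows. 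Case (c) is the mirror image: here $C_1=C(1)$ has the constant, hence non-decreasing, Hilbert function of $K[[t]]$, while $C_2=C(n_1,n_2,n_3)$ is assumed in the statement to have a Cohen-Macaulay tangent cone, so Theorem~\ref{maintheorem2} applies verbatim once more.

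The only genuinely external ingredient is Elias's theorem in case (b); everything else is bookkeeping, and I expect no deep obstacle. The one point demanding care is the asymmetry built into a nice gluing: because $q=a_1n_1$ is tied to the smallest generator $n_1$ of $S_2$, the two semigroups cannot be freely interchanged, and in Theorem~\ref{maintheorem2} it is always $C_1$ that must carry the non-decreasing Hilbert function and $C_2$ that must carry the Cohen-Macaulay tangent cone. This is exactly why case (c) must hypothesize that $C(n_1,n_2,n_3)$ has a Cohen-Macaulay tangent cone, whereas in case (b) no corresponding assumption on $C(m_1,m_2,m_3)$ is needed, since Elias's theorem supplies the non-decreasing Hilbert function there for free.
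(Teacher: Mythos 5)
Your proposal is correct and follows essentially the same route as the paper: case (a) via Theorem~\ref{maintheorem1} (the paper notes it also follows from Theorem~\ref{maintheorem2}), case (b) via Theorem~\ref{maintheorem2} together with Elias's theorem, and case (c) via Theorem~\ref{maintheorem2} using the hypothesized Cohen--Macaulayness of the tangent cone of $C_2$. Your added checks (embedding dimension four, the explicit principal-ideal verification that plane monomial curves have Cohen--Macaulay tangent cones, and the remark on the asymmetry of nice gluing) are correct elaborations of details the paper leaves implicit.
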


\begin{proof} In part a), the result follows both from Theorem
\ref{maintheorem1} and Theorem \ref{maintheorem2}. In part b), the
result follows from Theorem \ref{maintheorem2}, since every local
ring associated to the monomial curve $C_1=C(m_1,m_2,m_3)$ has
non-decreasing Hilbert function due to a result of Elias
\cite{elias}. In the same way, in part c), the result is a direct consequence of Theorem \ref{maintheorem2}.
\end{proof}

\section*{Acknowledgements} We would like to thank Apostolos Thoma
and Marcel Morales for mentioning the connection between extension
and gluing. We also would like to thank the referee for very helpful
suggestions.


\begin{thebibliography}{00}


\bibitem{arslan} F. Arslan, Cohen-Macaulayness of tangent cones,
Proc. Amer. Math. Soc. 128 (2000) 2243-2251.

\bibitem{pf} F. Arslan, P. Mete, Hilbert functions of Gorenstein monomial curves,
Proc. Amer. Math. Soc. 135 (2007) 1993-2002.

\bibitem{bayer} D. Bayer, M. Stillman, Computation of Hilbert functions, Journal of Symbolic Computation 14 (1992) 31-50.

\bibitem{delorme} C. Delorme, Sous-mono\"{\i}des d'intersection compl\`{e}te de $N$,
Ann. Sci. \'{E}cole Norm. (4) 9 No.1 (1976) 145-154.

\bibitem{eisenbud} D. Eisenbud, Commutative
Algebra with a View Toward Algebraic Geometry, Springer-Verlag,
1994.

\bibitem{elias} J. Elias, The Conjecture of Sally on the Hilbert Function for Curve Singularities, Journal
of Algebra 160 No.1 (1993) 42-49.

\bibitem{garcia} A. Garcia, Cohen-Macaulayness of the associated
graded of a semigroup ring, Comm. Algebra 10 No.4 (1982) 393-415.

\bibitem{singular} G.-M. Greuel, G. Pfister, A Singular Introduction to Commutative
Algebra, Springer-Verlag, 2002.

\bibitem{macaulay} F. S. Macaulay, The Algebraic Theory of Modular Systems, Cambridge University Press, Cambridge, 1916.

\bibitem{mo-tho} M. Morales and A. Thoma, Complete intersection lattice ideals, Journal
of Algebra 284 (2005) 755-770.

\bibitem{ros}J.C. Rosales, On presentations of subsemigroups of $\mathbb{N}^n$,
Semigroup Forum 55 (1997) 152-159.

\bibitem{sahin}M. \c{S}ahin, Producing set-theoretic complete intersection
monomial curves in $\mathbb{P}^{n}$, Proc. Amer. Math. Soc. 137 (2009) 1223-1233.

\bibitem{thoma4} A.Thoma, Construction of set-theoretic complete intersections via semigroup gluing,
Contributions to Algebra and Geometry 41(1) (2000) 195-198.



\end{thebibliography}
\end{document}